\newtheorem{theorem}{Theorem}
\newtheorem{corollary}{Corollary}
\def \QED {\hfill{$\Box$}}
\newenvironment{proofof}[1]{\noindent {\em Proof of #1.  }}{\QED}
\DeclareMathOperator*{\argmax}{arg\,max}
\DeclareMathOperator*{\argmin}{arg\,min}
\newcommand{\E}{\mathbb{E}}
\newcommand{\cut}[1]{}
\begin{document}

\title{Sequential Estimation of Network Cascades \thanks{This research was supported by the U.S. Air Force Office of Scientific Research under MURI Grant FA9550-18-1-0502.}
}

\author{\IEEEauthorblockN{Anirudh Sridhar}
\IEEEauthorblockA{\textit{Department of Electrical Engineering} \\
\textit{Princeton University}\\
Princeton, NJ \\
anirudhs@princeton.edu}
\and
\IEEEauthorblockN{H. Vincent Poor}
\IEEEauthorblockA{\textit{Department of Electrical Engineering} \\
\textit{Princeton University}\\
Princeton, NJ \\
poor@princeton.edu}
}

\maketitle

\begin{abstract}
We consider the problem of locating the source of a network cascade, given a noisy time-series of network data. Initially, the cascade starts with one unknown, affected vertex and spreads deterministically at each time step. The goal is to find an adaptive procedure that outputs an estimate for the source as fast as possible, subject to a bound on the estimation error. For a general class of graphs, we describe a family of matrix sequential probability ratio tests (MSPRTs) that are first-order asymptotically optimal up to a constant factor as the estimation error tends to zero. We apply our results to lattices and regular trees, and show that MSPRTs are asymptotically optimal for regular trees. We support our theoretical results with simulations. \\
\end{abstract}

\begin{IEEEkeywords}
Network cascade, sequential estimation, asymptotic optimality, hypothesis testing
\end{IEEEkeywords}

\section{Introduction}
Network cascades occur when the behavior of an individual or a small group of individuals diffuses rapidly through a network. Examples include the spread of epidemics in physical or geographical networks \cite{CF2010,flubreaks,Ant2015}, fake news in social networks \cite{tacchini2017some,AG17,Fourney2017}, and the propagation of viruses in computer networks \cite{KW91,MI17}. In each of these cases, the network cascade compromises the functionality of the network and it is of paramount importance to locate the source of the cascade as fast as possible.

This problem poses several interesting challenges. On one hand, network cascades are typically not directly observable even if one can monitor the network in real time. In the example of an epidemic spreading through a network, an individual's sickness could be caused by the epidemic or by exogenous factors (e.g., allergies). As the network is monitored over time, one may be able to distinguish between these possibilities at the cost of allowing the cascade to propagate further. Thus there is a fundamental tradeoff between the accuracy of the estimated cascade source and the amount of vertices affected by the cascade. How can we design algorithms that achieve the best possible tradeoff? 

In this paper, we take the first steps towards formalizing and solving the challenges addressed above. We begin by reviewing a model for network cascades with real-time noisy observations. We then study the problem of minimizing the expected run time of a sequential estimation algorithm for the cascade source subject to the estimation error being at most $\alpha$, for some $\alpha \in (0,1)$. We show that simple algorithms based on cumulative log likelihood ratios are first-order asymptotically optimal up to constant factors as we send the estimation error to zero for a large class of networks. In certain cases we can say more: the estimator we construct is first-order optimal in regular trees.

\subsection{A model of network cascades with noisy observations}
\label{subsec:cascade_model}

Let $G$ be a graph with vertex set $V$ and let time be indexed by an integer $t \ge 0$. We assume that the cascade starts from some vertex $v$ such that at $t = 0$, $v$ is the unique vertex affected. For any $t \ge 1$, a vertex $u$ is affected if and only if $u \in \mathcal{N}_v(t)$, where $\mathcal{N}_v(t)$ denotes the set of all vertices within distance $t$ from $v$ in the graph, with respect to the shortest path distance, denoted by $d(\cdot,\cdot)$. The cascade is not directly observable, but the system instead monitors {\it public states} $\{y_u(t) \}_{u \in V, t \ge 0}$.  Conditioned on the source being $v$, the public states are independent over all $u$ and $t$, with distributions given by 
$$
y_u(t) \sim \begin{cases}
Q_0 & u \notin \mathcal{N}_v(t); \\
Q_1 & u \in \mathcal{N}_v(t),
\end{cases}
$$
where $Q_0, Q_1$ are two distinct mutually absolutely continuous probability measures over $\mathbb{R}$. We can think of $y_u(t) \sim Q_0$ as typical behavior and $y_u(t) \sim Q_1$ as anomalous behavior caused by the cascade. This is a standard model which has been previously used, for instance, in studying quickest detection problems on networks \cite{ZouVeeravalli2018,ZVLT2018,ZVLT2019_IEEE,Rovatsos2019,qcd_empirical}.

\subsection{Formulation as a sequential hypothesis testing problem}
Let $(\Omega, \mathcal{F}, \mathbb{P})$ be a common probability space for all random objects. For each vertex $u$, let $H_u$ be the hypothesis that $u$ is the cascade source and let $\mathbb{P}_u : = \mathbb{P}(\cdot \mid H_u)$ be the associated measure. Any sequential estimator for the cascade source can be represented by a pair $(D,T)$, where $T$ is a (data dependent) stopping time and $D = \{D(t) \}_{t =0}^\infty$ is a sequence of estimators such that $D(t) \in V$ depends on the observations $y(0), \ldots, y(t)$. The output of the sequential estimation procedure is $D(T)$. Given a positive integer $R$ (which we will call the {\it confidence radius}), we say that the sequential estimator $(D,T)$ {\it succeeds} if $d(D(T),v) \le R$; else it fails. We can then formalize the tradeoff between estimator accuracy and number of infections as follows. Given $\alpha \in (0,1)$, we want to find the estimator which minimizes the worst-case expected runtime $\max_{v \in V} \E_v [ T]$, subject to the probability of failure being at most $\alpha$. 

A typical assumption in source estimation problems is that the graph $G$ has infinitely many vertices, is connected, and is locally finite\footnote{A graph is locally finite if the degree of each vertex is finite.} (see \cite{shah2011rumors}). However, the infinite graph setting corresponds to testing infinitely many hypotheses, and it is unclear whether there exists a sequential estimator with small error that will terminate in finite time. To remedy this situation, we will consider the behavior of sequential tests on finite restrictions of the graph. Formally, we fix a sequence $\{V_n\}_n$ where $V_n \subset V$ and $|V_n| = n$. In our analysis we will specify $n$ and assume that $D(t) \in V_n$; that is, the true source is an element of $V_n$. We will assume that $V_n$ is a neighborhood of a given vertex $v_0$ without loss of generality as the problem is only harder when all vertices are close to each other. 

We may now put these ideas together more formally to define a class of sequential estimators $\Delta_G(V_n, R,\alpha)$ for which the probability of failure is at most $\alpha$. Equations \eqref{eq:formulation_1} and \eqref{eq:formulation_2} below present two natural formulations for this set. 
\begin{align}
\label{eq:formulation_1}
\Delta_G' & : = \left \{ (D,T) : \forall v \in V_n, \mathbb{P}_v (d(D(T),v) > R) \le \alpha \right \} \\
\label{eq:formulation_2}
\Delta_G & : = \left \{ (D,T) :  \max\limits_{\substack{u,v \in V_n : \\ d(u,v) > R }} \mathbb{P}_v(D(T) = u) \le \frac{\alpha}{n} \right \}.
\end{align}
Formulation \eqref{eq:formulation_1} directly bounds the probability of failure, while formulation \eqref{eq:formulation_2} provides finer information on the distribution of the estimator outside of the confidence radius, and is thus more mathematically convenient. It is easy to see that $\Delta_G \subset \Delta_G'$. For any $(D,T) \in \Delta_G$ and for any $v \in V_n$, 
\begin{equation}
\label{eq:2_implies_1}
\mathbb{P}_v(d(D(T),v) > R) = \sum\limits_{u \in V_n \setminus \mathcal{N}_v(R)} \mathbb{P}_v(D(T) = u) \le \alpha. \hspace{-0.05cm}
\end{equation}
In other words, \eqref{eq:2_implies_1} shows that Formulation \eqref{eq:formulation_2} is stronger than Formulation \eqref{eq:formulation_1}. We believe that in certain cases, the two formulations are equivalent if $R$ is sufficiently large and $G$ satisfies some symmetry properties (e.g., vertex-transitivity). While we use \eqref{eq:formulation_2} in this paper, we will study the relationship between \eqref{eq:formulation_1} and \eqref{eq:formulation_2} in future work. 

Putting everything together, our goal will be to characterize
\begin{equation}
\label{eq:optimization}
T^*(V_n,R_n,\alpha) : = \min\limits_{(D,T) \in \Delta_G(V_n,R_n,\alpha)} \max\limits_{v \in V_n} \mathbb{E}_v \left[ T \right].
\end{equation}
For a fixed sequential estimator $(D,T)$, the inner maximum is the worst-case expected runtime of the estimator over all possible sources. The outer minimum is over all $(D,T)$ which meet the requirements of $\Delta_G(V_n,R_n,\alpha)$. In general sequential multi-hypothesis testing problems, characterizing the optimal test is intractable. We therefore study the asymptotics of \eqref{eq:optimization} first as $n \to \infty$ then as $\alpha \to 0$. We consider confidence radii $R_n$ that may be fixed with respect to all other parameters, or may grow with $n$. 

\subsection{Related work}

Although we are, to the best of our knowledge, the first to study this variant of the cascade source estimation problem, our work has close connections to several bodies of work. 

Shah and Zaman gave the first systematic study of estimating the source of a network cascade \cite{shah2011rumors,shah2010detecting}, which spawned several follow-up works, see for example \cite{shah2016finding,KhimLoh15,epidemic_message_passing,Luo2013IdentifyingIS,wang2014,feizi2015}. In their setup, they assume that the network cascade evolves according to a probabilistic model, and that at some future time a snapshot of the cascade is perfectly observed. The problems we address surrounding network cascades are complementary to this approach, and are more appropriate for the setting where one may monitor the state of the network in real time.  

Our work falls under the growing body of literature on sequential detection and estimation in networks. Recently Zou, Veeravalli, Li, Towsley and Rovatsos studied the problem of quickest detection of a network cascade \cite{ZouVeeravalli2018,ZVLT2018,ZVLT2019_IEEE,Rovatsos2019}, which is similar in nature to our work. The objective of their work is to detect with minimum delay when a cascade has started propagating in a network. They derive tests based on cumulative log-likelihood ratios that are shown to be asymptotically optimal when the growth rate of the cascade becomes very small. The detection problem with other cascade models were studied by Zhang, Yao, Xie and Qiu \cite{qcd_empirical}. Our work, on the other hand, studies problems of estimation. We assume simple cascade dynamics for ease of exposition, and extensions to other cascade dynamics is an important future direction. 

\section{Asymptotic behavior of optimal tests}
\label{sec:results}
At the core of the analysis is a characterization of the rate of convergence of the log-likelihood ratios. It is well known that tests based on log-likelihood ratios are optimal for distinguishing between two hypotheses \cite{wald1948} and are asymptotically optimal as the Type I error tends to 0 in the general multi-hypothesis testing problem \cite{Tartakovsky1998,VB1994,VB1995}. Thus, to motivate our results, we begin by studying some basic properties of the log-likelihood ratios that arise from our problem structure. 

For a positive integer $s$, define the shorthand $y(s) : = \{y_u(s)\}_{u \in V}$ to be the collection of public states at time $s$. We are interested in the cumulative log-likelihood ratio 
$$
Z_{vu}(t) : = \sum\limits_{s = 0}^t \log \frac{ d \mathbb{P}_v}{d \mathbb{P}_u} (y(s)). 
$$
From the cascade dynamics defined in Section \ref{subsec:cascade_model}, we can write the log-likelihood ratio $\log \frac{ d\mathbb{P}_v}{ d \mathbb{P}_u} (y(s))$ as
\begin{multline}
 \label{eq:likelihood_ratio_derivation}
 \log \frac{ \prod\limits_{w \in \mathcal{N}_v(s)} dQ_1( y_w(s)) \cdot \prod\limits_{w \notin \mathcal{N}_v(s)} dQ_0(y_w(s))}{ \prod\limits_{w \in \mathcal{N}_u(s)} dQ_1(y_w(s)) \cdot \prod\limits_{w \notin \mathcal{N}_u(s)} dQ_0(y_w(s))}  \\
 =  \hspace{-0.25cm} \sum\limits_{w \in \mathcal{N}_v(s) \setminus \mathcal{N}_u(s)} \hspace{-0.4cm} \log \frac{ dQ_1}{dQ_0}(y_w(s)) \hspace{0.1cm} + \hspace{-0.5cm} \sum\limits_{w \in \mathcal{N}_u(s) \setminus \mathcal{N}_v(s) } \hspace{-0.4cm} \log \frac{ dQ_0}{dQ_1}(y_w(s)).
\end{multline}
Under $\mathbb{P}_v$, all observed variables $y_w(s)$ are independent, with distributions given by 
\begin{equation}
\label{eq:observation_distributions}
y_w(s) \sim \begin{cases}
Q_0 & w \in \mathcal{N}_u(s) \setminus \mathcal{N}_v(s); \\
Q_1 & w \in \mathcal{N}_v(s) \setminus \mathcal{N}_u(s).
\end{cases}
\end{equation}
To simplify the analysis we assume that for each $u,v \in V$ and $t \ge 0$, $\mathcal{N}_v(t) \setminus \mathcal{N}_u(t)$ is nonempty, and $| \mathcal{N}_v(t) \setminus \mathcal{N}_u(t) | = | \mathcal{N}_u(t) \setminus \mathcal{N}_v(t) |$. This assumption holds for a large class of graphs (e.g., vertex-transitive graphs such as regular trees and lattices). For $u,v \in V$ define 
$$
f_{vu}(t) : = \sum\limits_{s = 0}^t | \mathcal{N}_v(s) \setminus \mathcal{N}_u(s) |.
$$
Equations \eqref{eq:likelihood_ratio_derivation} and \eqref{eq:observation_distributions} imply $\mathbb{E}_v \left[ Z_{vu}(t) \right] = \widetilde{D}(Q_0,Q_1) f_{vu}(t)$, where $\widetilde{D}(Q_0,Q_1)$ is the symmetrized Kullback-Leibler divergence between $Q_0$ and $Q_1$, given explicitly by 
$$
\widetilde{D}(Q_0,Q_1) := \int \log \left(\frac{d Q_1}{dQ_0} \right)  d Q_1 + \int \log \left(\frac{d Q_0}{d Q_1}\right)  d Q_0.
$$
Before stating our first main result on lower bounds for $T^*$, we will introduce some useful asymptotic notation. For two functions $g(n,\alpha)$ and $h(n,\alpha)$, we write, whenever it is well-defined, 
$$
g(n,\alpha) \gtrsim_{n,\alpha} h(n,\alpha) \iff \liminf\limits_{\alpha \to 0} \liminf\limits_{n \to \infty} \frac{ g(n,\alpha)}{ h(n,\alpha)} \ge 1. 
$$
The orderwise comparison operator $\lesssim_{n,\alpha}$ is analogously defined, but with limsups instead of liminfs. We also write $g(n,\alpha) \approx_{n,\alpha} h(n,\alpha)$ if and only if $g(n,\alpha) \gtrsim_{n,\alpha} h(n,\alpha)$ and $g(n,\alpha) \lesssim_{n,\alpha} h(n,\alpha)$. 

\begin{theorem}
\label{thm:lower_bound}
Let $F_{vu}$ be the inverse function of $f_{vu}$ and suppose $\log | \mathcal{N}_v(R_n) | \ll \log n$. Then 
\begin{equation*}
T^*(V_n,R_n,\alpha)  \gtrsim_{n,\alpha}  \max\limits_{\substack{u,v \in V_n : d(u,v) > 2R_n }} F_{vu} \left( \frac{\log n/ \alpha}{\widetilde{D}(Q_0,Q_1)} \right).
\end{equation*}
\end{theorem}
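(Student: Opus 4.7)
The overall plan is to adapt the Tartakovsky-style lower bound for sequential multi-hypothesis testing. Fix any $(D,T) \in \Delta_G$ and any pair $u, v \in V_n$ with $d(u,v) > 2R_n$; the goal is to show $\E_v[T] \gtrsim_{n,\alpha} F_{vu}(\log(n/\alpha)/\widetilde{D}(Q_0,Q_1))$, from which the theorem will follow by taking maxima. The chain of reductions I have in mind is: the error constraints of $\Delta_G$ force the event $\{D(T) \in \mathcal{N}_v(R_n)\}$ to have very different probabilities under $\mathbb{P}_v$ and $\mathbb{P}_u$; a change-of-measure step converts this into a lower bound on the expected log-likelihood $\E_v[Z_{vu}(T)]$; Wald's identity converts that into a lower bound on $\E_v[f_{vu}(T)]$; and a concentration argument converts that, in turn, into a lower bound on $\E_v[T]$ itself.

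Concretely, I would first set $A := \{D(T) \in \mathcal{N}_v(R_n)\}$. Since $d(u,v) > 2R_n$, every $w \in \mathcal{N}_v(R_n)$ satisfies $d(w,u) > R_n$, so summing $\mathbb{P}_u(D(T) = w) \le \alpha/n$ over $w \in \mathcal{N}_v(R_n)$ gives $\mathbb{P}_u(A) \le |\mathcal{N}_v(R_n)|\,\alpha/n$, while \eqref{eq:2_implies_1} gives $\mathbb{P}_v(A) \ge 1 - \alpha$. Using the identity $\E_v[Z_{vu}(T)] = D_{\mathrm{KL}}(\mathbb{P}_v|_{\mathcal{F}_T} \,\|\, \mathbb{P}_u|_{\mathcal{F}_T})$ and the data processing inequality applied to $A \in \mathcal{F}_T$, one obtains $\E_v[Z_{vu}(T)] \ge d_{\mathrm{KL}}(\mathbb{P}_v(A) \,\|\, \mathbb{P}_u(A))$; plugging in the bounds and invoking the hypothesis $\log|\mathcal{N}_v(R_n)| \ll \log n$ gives $\E_v[Z_{vu}(T)] \gtrsim_{n,\alpha} \log(n/\alpha)$. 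Applying optional stopping to the mean-zero martingale $Z_{vu}(t) - \widetilde{D}(Q_0,Q_1) f_{vu}(t)$ (with a standard truncation of $T$ to ensure integrability) then yields $\E_v[Z_{vu}(T)] = \widetilde{D}(Q_0,Q_1)\,\E_v[f_{vu}(T)]$, and so $\E_v[f_{vu}(T)] \gtrsim_{n,\alpha} \log(n/\alpha)/\widetilde{D}(Q_0,Q_1)$.

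The hard part will be converting this bound on $\E_v[f_{vu}(T)]$ into a bound on $\E_v[T]$. For the graphs of primary interest $F_{vu}$ is concave (logarithmic for regular trees, root-like for lattices), so a direct Jensen inequality yields $\E_v[T] \le F_{vu}(\E_v[f_{vu}(T)])$, which points the wrong way. To salvage a lower bound I would promote the expectation-level inequality to an almost-sure one via an exponential-tilting/Chernoff argument: writing $\mathbb{P}_u(A) = \E_v[e^{-Z_{vu}(T)} \mathbf{1}_A]$ and bounding below by $e^{-c}\mathbb{P}_v(A \cap \{Z_{vu}(T) < c\})$ gives $\mathbb{P}_v(Z_{vu}(T) \ge (1-o(1))\log(n/\alpha)) \ge 1 - o(1)$ under $\mathbb{P}_v$, and then Azuma--Hoeffding applied to $Z_{vu}(t) - \widetilde{D}(Q_0,Q_1) f_{vu}(t)$ controls the $O(\sqrt{f_{vu}(T)})$ martingale fluctuation, so on the same high-probability event $f_{vu}(T) \gtrsim_{n,\alpha} \log(n/\alpha)/\widetilde{D}(Q_0,Q_1)$. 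Monotonicity of $F_{vu}$ then gives $T \gtrsim_{n,\alpha} F_{vu}(\log(n/\alpha)/\widetilde{D}(Q_0,Q_1))$ on that event, and integrating followed by maximizing over $v \in V_n$ and over pairs $(u,v)$ with $d(u,v) > 2R_n$ completes the argument.
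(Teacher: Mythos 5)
Your final argument is, in substance, the paper's own proof: the change of measure $\mathbb{P}_u(A)=\E_v[e^{-Z_{vu}(T)}\mathbf{1}_A]$ together with the bounds $\mathbb{P}_v(A)\ge 1-\alpha$ and $\mathbb{P}_u(A)\le |\mathcal{N}_v(R_n)|\alpha/n$ forces $Z_{vu}(T)\ge(1-o(1))\log(n/\alpha)$ with high $\mathbb{P}_v$-probability, a maximal concentration bound shows $Z_{vu}$ cannot reach that level before time $L=F_{vu}\bigl(\tfrac{(1-\epsilon)\log(n/\alpha)}{\widetilde{D}(Q_0,Q_1)+\epsilon}\bigr)$, and Markov's inequality finishes. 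Your opening detour through the data-processing inequality and optional stopping is fine as far as it goes, and you correctly diagnose why it dies (Jensen points the wrong way for concave $F_{vu}$), so no complaint there.

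The one substantive issue is the concentration tool in the salvage step. Azuma--Hoeffding requires bounded martingale increments, i.e.\ bounded log-likelihood ratios $\log\tfrac{dQ_1}{dQ_0}$, and the paper assumes only that $Q_0,Q_1$ are mutually absolutely continuous with finite symmetrized divergence; the Gaussian pairs used in the paper's own simulations already violate boundedness. The paper instead invokes the SLLN-based maximal bound (Lemma 2.1 of Tartakovsky, 1998), which gives $\mathbb{P}_v\bigl(\max_{t\le L}Z_{vu}(t)\ge B\bigr)\to 0$ for $B=(1-\epsilon)\log\tfrac{n}{\alpha|\mathcal{N}_v(R_n)|}$ and the above $L$, using nothing beyond the strong law for the normalized sums. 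Relatedly, be explicit that you need this bound \emph{uniformly over $t\le L$} (not Azuma at the single random time $T$): the logic is that on $\{T\le L\}$ one has $Z_{vu}(T)\le\max_{t\le L}Z_{vu}(t)<B$ w.h.p., contradicting the change-of-measure conclusion, hence $\mathbb{P}_v(T>L)\to 1$. With Azuma replaced by this maximal SLLN bound (or with an added sub-Gaussian hypothesis on the log-likelihood ratios plus a union bound over $t\le L$), your proof is complete and coincides with the paper's.
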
 

\begin{proofof}{Theorem \ref{thm:lower_bound}}
We briefly discuss the high-level proof strategy. The term on the right hand side involving $F_{vu}$ is the time it takes for $\E_v [ Z_{vu}(t)]$ to cross the threshold $\log n/\alpha$, since $F_{vu}$ is the inverse function of the log-likelihood growth rate $f_{vu}$. Using a change of measure argument, we show $\E_v [ Z_{vu}(t)]$ {\it must} cross this threshold to achieve the guarantees of $\Delta_G(V_n, R_n, \alpha)$.

To show this formally, fix any $(D,T) \in \Delta_G(V_n,R_n,\alpha)$ as well as a vertex $v \in V_n$. Define the event $\Omega_{v,L} : = \{D(T) \in \mathcal{N}_v(R_n) \} \cap \{T \le L \}$, where $L$ is a positive integer to be chosen later. By a change of measure,  
$$
\mathbb{P}_u(D(T) \in \mathcal{N}_v(R_n)) = \mathbb{E}_v \left[ \mathds{1}_{\{D(T) \in \mathcal{N}_v(R_n)  \}} e^{ - Z_{vu}(T) } \right].
$$
For any positive integer $B$, 
\begin{multline*}
\mathbb{P}_u ( D(T) \in \mathcal{N}_v(R_n))  \ge \mathbb{E}_v \left[ \mathds{1}_{ \Omega_{v,L} \cap \{Z_{vu}(T) < B \} } e^{-Z_{vu}(T) } \right] \\
 \ge e^{-B} \mathbb{P}_v \left( \Omega_{v,L} \cap \left \{ \max\limits_{t \le L} Z_{vu}(t) < B \right \} \right) \\
 \ge e^{-B} \left( \mathbb{P}_v (\Omega_{v,L} ) - \mathbb{P}_v \left(\max\limits_{t \le L} Z_{vu}(t) \ge B \right) \right)
\end{multline*}
Noting that $\mathbb{P}_v(\Omega_{v,L}) \ge \mathbb{P}_v (D(T) \in \mathcal{N}_v(R_n) ) - \mathbb{P}_v(T > L)$ and substituting this in place of $\mathbb{P}_v(\Omega_{v,L})$ gives 
\begin{multline*}
\mathbb{P}_v(T > L) \ge \mathbb{P}_v(D(T) \in \mathcal{N}_v(R_n) ) \\
 - e^B \mathbb{P}_u(D(T) \in \mathcal{N}_v(R_n))  - \mathbb{P}_v \left( \max\limits_{t \le L} Z_{vu}(t) \ge B \right)
\end{multline*}
From \eqref{eq:formulation_2}, $\mathbb{P}_v (D(T) \in \mathcal{N}_v(R_n) ) \ge 1 - \alpha$ and $\mathbb{P}_u \left( D(T) \in \mathcal{N}_v(R_n) \right) \le \frac{  \alpha | \mathcal{N}_v(R_n ) | }{n}$ for $u \in V_n \setminus \mathcal{N}_v(2R_n)$. It follows that
\begin{multline}
\label{eq:lower_bound_expansion}
\mathbb{P}_v (T > L) \ge 1 - \alpha \\
 - e^B \cdot \frac{ \alpha | \mathcal{N}_v(R_n) |}{n } - \mathbb{P}_v \left( \max\limits_{t \le L} Z_{vu}(t) \ge B \right).
\end{multline}
Let $\epsilon > 0$, and set 
\begin{align*}
L &: = F_{vu} \left( \frac{1 - \epsilon}{ \widetilde{D}(Q_0,Q_1) + \epsilon} \log  \frac{ n}{ \alpha | \mathcal{N}_v(R_n)|}  \right) \\
B & : = (1 - \epsilon) \log \frac{ n}{ \alpha | \mathcal{N}_v(R_n)|}.
\end{align*}
Then by the strong law of large numbers (see Lemma 2.1 in \cite{Tartakovsky1998}), $\mathbb{P}_v \left( \max_{t \le L} Z_{vu}(t) \ge B \right)$ goes to 0 as $n \to \infty$. Plugging in these values to \eqref{eq:lower_bound_expansion} gives the following lower bound on $\mathbb{P}_v(T > L)$: 
\begin{equation*}
1 - \alpha - \left( \frac{\alpha | \mathcal{N}_v(R_n) |}{ n} \right)^{\epsilon} - \mathbb{P}_v \left(\max\limits_{t \le L} Z_{vu}(t) \ge B \right).
\end{equation*}
Take $n \to \infty$ and then $\alpha \to 0$ to obtain
$$
\liminf\limits_{\alpha \to 0} \liminf \limits_{n \to \infty} \mathbb{P}_v \left( T >  F_{vu} \left( \frac{(1 - \epsilon)\log \frac{ n}{\alpha | \mathcal{N}_v(R_n) |}}{ \widetilde{D}(Q_0, Q_1) + \epsilon}  \right) \right) \ge 1.
$$
We conclude from Markov's inequality and by considering only the first-order terms (see Lemma 2.1 in \cite{Tartakovsky1998}). 
\end{proofof}

Next we establish an upper bound on $T^*$ by considering families of matrix sequential probability ratio tests (MSPRTs) \cite{Tartakovsky1998,VB1994,VB1995}. Define the stopping time
$$
T_v : = \min\left\{ t \ge 0: \min\limits_{u \in V_n\setminus \mathcal{N}_v(R_n) } Z_{vu}(t) \ge \log \frac{n}{\alpha} \right \},
$$
and define the pair $(D_n,T_n)$ via 
\begin{align*}
T_n & : = \min_{v \in V_n} T_v, \hspace{0.5cm} D_n(t)  : = \argmax_{v \in V_n} \min_{u \in V_n \setminus \mathcal{N}_v(R_n)} Z_{vu}(t)
\end{align*}
so that $D_n(T_n) := \argmin_{v \in V_n} T_v$. It is simple to verify $(D_n,T_n) \in \Delta_G(V_n,R_n,\alpha)$. For $u \in V_n\setminus \mathcal{N}_v(R_n)$, 
\begin{multline*}
\hspace{-0.35cm}\mathbb{P}_v (D_n(T_n) = u)  \le \mathbb{P}_v \left( T_u < \infty \right) = \mathbb{E}_u \left[ \mathds{1}_{\{T_u < \infty \}} e^{- Z_{uv}(T_u) } \right] \\
 = e^{- \log n/\alpha} \mathbb{E}_u \left[ \mathds{1}_{ \{T_u < \infty \}} e^{- (Z_{uv}(T_u) - \log n/\alpha ) } \right].
\end{multline*}
Since $Z_{vu}(T_u) \ge \log n/\alpha$ by the definition of $T_u$, we have an upper bound of $\alpha/n$ as desired. The following theorem gives us an upper bound for the expectation of $T_n$. 

\begin{theorem}
\label{thm:upper_bound}
Let $\alpha \in (0,1)$ be fixed. There exists a constant $C(Q_0,Q_1) \in (0, \widetilde{D}(Q_0,Q_1))$ such that for every $v \in V_n$,
$$
\mathbb{E}_v \left[ T_n \right] \le \max\limits_{u \in V_n: d(u,v) > R_n} F_{vu} \left( \frac{ \log n }{ C(Q_0,Q_1) } \right) (1 + o_n(1)),
$$
where $o_n(1) \to 0$ as $n \to \infty$. 
\end{theorem}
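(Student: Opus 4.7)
The plan is to note that $T_n = \min_w T_w \le T_v$ almost surely when $v$ is the true source, so $\mathbb{E}_v[T_n] \le \mathbb{E}_v[T_v]$, and then to show that $T_v$ concentrates below $t_n^* := \max_{u \in V_n : d(u,v) > R_n} F_{vu}(\log(n/\alpha)/C)$ for a suitably chosen $C \in (0, \widetilde{D}(Q_0,Q_1))$. Under $\mathbb{P}_v$, equation \eqref{eq:likelihood_ratio_derivation} shows that $Z_{vu}(t)$ is a sum of $2 f_{vu}(t)$ independent log-likelihood increments with total mean $\widetilde{D}(Q_0,Q_1) f_{vu}(t)$. Assuming the standard mild moment-generating-function regularity on $\log(dQ_1/dQ_0)$, a Chernoff bound supplies a Cramér rate $I : (0,\widetilde{D}(Q_0,Q_1)) \to (0,\infty)$ with
$$\mathbb{P}_v\bigl(Z_{vu}(t) < c f_{vu}(t)\bigr) \le e^{-I(c) f_{vu}(t)}, \quad c \in (0, \widetilde{D}(Q_0,Q_1)).$$
Since $I(c) \sim c^2/(2\sigma^2)$ as $c \to 0^+$, one can choose $C := C(Q_0,Q_1) \in (0,\widetilde{D}(Q_0,Q_1))$ small enough that $\rho := I(C)/C > 1$.

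With this choice, $f_{vu}(t_n^*) \ge \log(n/\alpha)/C$ for every $u \in V_n$ with $d(u,v) > R_n$, so
$$\mathbb{P}_v\bigl(Z_{vu}(t_n^*) < \log(n/\alpha)\bigr) \le e^{-I(C) f_{vu}(t_n^*)} \le (n/\alpha)^{-\rho},$$
and a union bound over the at most $n$ competing hypotheses gives $\mathbb{P}_v(T_v > t_n^*) \le n^{1-\rho}\alpha^{\rho} = o_n(1)$. To upgrade this tail bound to an expectation bound, write $\mathbb{E}_v[T_v] \le t_n^* + \sum_{t > t_n^*} \mathbb{P}_v(T_v > t)$. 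For $t > t_n^*$, setting $f^*(t) := \min_{u : d(u,v) > R_n} f_{vu}(t)$, the same union-bound argument yields $\mathbb{P}_v(T_v > t) \le n\, e^{-I(C) f^*(t)}$; since $f^*$ is strictly increasing with $f^*(t_n^*) \ge \log(n/\alpha)/C$, the tail sum is geometrically dominated by its leading term and contributes $O(n^{1-\rho}) = o_n(1)$. Hence $\mathbb{E}_v[T_v] \le t_n^*(1+o_n(1))$. Finally, for fixed $\alpha$ one has $\log(n/\alpha) = (\log n)(1+o_n(1))$, and by the regularity of $f_{vu}$ (polynomial growth on lattices, exponential on regular trees) one verifies $F_{vu}(\log(n/\alpha)/C) = F_{vu}(\log n/C)(1+o_n(1))$, yielding the claimed bound.

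The main obstacle I expect is the combined control of the union bound over $n$ competing hypotheses together with the tail series $\sum_{t > t_n^*} \mathbb{P}_v(T_v > t)$. These two requirements jointly force $\rho = I(C)/C$ to strictly exceed $1$, which in turn prevents $C$ from being chosen near $\widetilde{D}(Q_0,Q_1)$ and is precisely what creates the constant-factor gap with the lower bound in Theorem \ref{thm:lower_bound}. A secondary but routine point is justifying the replacement of $\log(n/\alpha)$ by $\log n$ inside $F_{vu}$; this is a soft regularity check that holds uniformly across the graph families considered in the sequel once one knows $f_{vu}$ is regularly varying or exponential.
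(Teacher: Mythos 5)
Your proposal follows essentially the same route as the paper's proof: bound $T_n \le T_v$, union-bound over the at most $n$ alternatives, apply a Chernoff bound to each $Z_{vu}(t)$ (a sum of i.i.d.\ increments under $\mathbb{P}_v$) at the deterministic time $t_n^* = \max_u F_{vu}(\log(n/\alpha)/C)$, and control the tail series $\sum_{t > t_n^*}\mathbb{P}_v(T_v>t)$ using the fact that $\min_u f_{vu}(t)$ increases at least linearly in $t$; the paper's choice $C = \min\{\widetilde{D}(Q_0,Q_1)-\epsilon,\, I(\widetilde{D}(Q_0,Q_1)-\epsilon)\}$ plays exactly the role of your $C$ (and the paper gets by with the weaker requirement that the tail sum be $O(1) = o(t_n^*)$, rather than insisting the tail probability itself vanish).

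One justification in your write-up is wrong, although the conclusion it is meant to support is true. The asymptotic $I(c)\sim c^2/(2\sigma^2)$ describes the rate function near the mean, i.e.\ $I(\widetilde{D}(Q_0,Q_1)-\delta)\asymp \delta^2$ as $\delta\to 0$, not the behavior as $c\to 0^+$; if $I$ really vanished quadratically at $0$ then $I(C)/C\to 0$ and no admissible $C$ would exist, contradicting the inference you draw. What actually saves the argument is that $I$ is nonincreasing in $c$ on $(0,\widetilde{D}(Q_0,Q_1))$ with $I(0^+)>0$ (a Chernoff-information-type constant, strictly positive since $Q_0\ne Q_1$), so $I(C)/C\to\infty$ as $C\to 0^+$ and a $C$ with $I(C)/C>1$ certainly exists. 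With that one repair, your argument is sound and matches the paper's.
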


\begin{proof}
We begin by upper bounding $\mathbb{P}_v(T_v > t)$. We can write 
\begin{align}
\mathbb{P}_v \left( T_v > t \right) & \le \mathbb{P}_v \left( \min\limits_{u \in V_n \setminus \mathcal{N}_v(R_n) } Z_{vu}(t) < \log \frac{n}{\alpha} \right) \nonumber \\
\label{eq:stopping_time_bound}
& \le \sum\limits_{u \in V_n \setminus \mathcal{N}_v(R_n) } \mathbb{P}_v \left( Z_{vu}(t) < \log \frac{n}{\alpha} \right).
\end{align}
Fix $\epsilon > 0$ and suppose that, for all $u \in V_n \setminus \mathcal{N}_v(R_n)$,  
$$
\log n/\alpha \le ( \widetilde{D}(Q_0,Q_1) - \epsilon) f_{vu}(t).
$$
To deal with the terms in the summation, we will use Chernoff bounds for $Z_{vu}(t)$. Suppose that $X,Y$ are independent random variables with $X \sim Q_1$ and $Y \sim Q_0$, and define  
$$
I(x) : = \sup\limits_{\lambda \ge 0} \left \{ \lambda x + \log \E \left[ \left( \frac{dQ_0}{dQ_1}(X) \right)^\lambda \left( \frac{dQ_1}{dQ_0}(Y) \right)^\lambda \right] \right \}. 
$$
Furthermore, we have the strict inequality $I(x) > 0$ for $x \le \widetilde{D}(Q_0,Q_1)$ since the $\lambda$-moments of $\frac{dQ_0}{dQ_1}(X)$ and $\frac{dQ_1}{dQ_0}(Y)$ exist for $\lambda \in [0,1]$. Since $Z_{vu}(t)$ can be written as a sum of i.i.d. random variables under $\mathbb{P}_v$, we have the bound
$$
\mathbb{P}_v (Z_{vu}(t) \le x f_{vu}(t) ) \le e^{- f_{vu}(t) I(x) } \text{ for } x \le \widetilde{D}(Q_0,Q_1).
$$
Hence we can bound the summation \eqref{eq:stopping_time_bound} by 
\begin{equation}
\label{eq:sum_bound}
\mathrm{exp}\left( \log n - \min_{u \in V_n \setminus \mathcal{N}_v(R_n)} f_{vu}(t)I(\widetilde{D}(Q_0,Q_1) - \epsilon) \right).
\end{equation}
Set $C(Q_0,Q_1) :  = \min \{ \widetilde{D}(Q_0,Q_1) - \epsilon, I(\widetilde{D}(Q_0,Q_1) - \epsilon) \}$ and define
$$
t_{n,\epsilon} : = \max\limits_{\substack{u \in V_n:d(u,v) > R_n}} F_{vu} \left( \frac{ \log n /\alpha}{C(Q_0,Q_1)} \right).
$$
Next, write $\mathbb{E}_v [ T_v ] = \sum_{t = 0}^\infty \mathbb{P}_v (T_v > t)$ and apply \eqref{eq:sum_bound} to bound $\E_v [ T_v]$ by 
$$
 t_{n,\epsilon} + \hspace{-0.25cm} \sum\limits_{t = t_{n,\epsilon} + 1}^\infty \hspace{-0.2cm}\mathrm{exp}\left(  \log n - \hspace{-0.4cm} \min\limits_{\substack{u \in V_n:  \\ d(u,v) > R_n } } f_{vu}(t) I(\widetilde{D}(Q_0,Q_1) - \epsilon) \right)
$$
which is $t_{n,\epsilon}(1 + o_n(1))$ where $o_n(1) \to 0$ as $n \to \infty$. The desired result follows from considering only first-order terms.
\end{proof}

\section{Applications to regular trees and lattices}
\label{sec:trees_and_lattices}

To apply Theorems \ref{thm:lower_bound} and \ref{thm:upper_bound}, it suffices to compute $F_{vu}$ for the graphs of interest. The following result shows that $(D_n,T_n)$ is asymptotically optimal as $n \to \infty$ and $\alpha \to 0$ in regular trees.

\begin{corollary}
\label{cor:trees}
Let $G$ be the infinite $k$-regular tree. If $R_n \ll \log n$, then the MSPRT is asymptotically optimal, and 
$$
T^*(V_n,R_n,\alpha) \approx_{n,\alpha} \frac{ \log \log n}{\log (k-1)}.
$$
\end{corollary}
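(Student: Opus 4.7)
The plan is to specialize Theorems \ref{thm:lower_bound} and \ref{thm:upper_bound} to the infinite $k$-regular tree; this reduces everything to estimating the growth rate $f_{vu}(t) = \sum_{s=0}^{t}|\mathcal{N}_v(s) \setminus \mathcal{N}_u(s)|$ and inverting.

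First I would fix $u,v$ with $d(u,v) = d \ge 1$ and decompose the tree along the unique geodesic $u = w_0, w_1, \ldots, w_d = v$. Each vertex $w$ has a unique attachment point $w_i$ on the geodesic; with $j = d(w, w_i)$ we have $d(w,u) = j + i$ and $d(w,v) = j + (d - i)$. Hence $w \in \mathcal{N}_v(s) \setminus \mathcal{N}_u(s)$ iff $s - i < j \le s - (d - i)$, which in particular forces $i > d/2$. Since the number of off-path vertices at distance $j$ from $w_i$ is $(k-2)(k-1)^{j-1}$ for $0 < i < d$ and $(k-1)^j$ for $i \in \{0, d\}$, summing two geometric series yields $|\mathcal{N}_v(s) \setminus \mathcal{N}_u(s)| = C_{k,d}(k-1)^s(1 + o_s(1))$ where $C_{k,d}$ is positive and bounded above and below by constants depending only on $k$. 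Consequently
$$
f_{vu}(t) = C'_{k,d}(k-1)^t(1 + o_t(1)), \qquad F_{vu}(x) = \frac{\log x}{\log(k-1)}(1 + o(1)),
$$
where the $o(1)$ term is uniform over $d \ge 1$.

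Next I would verify the hypotheses and plug in. The exponential growth $|\mathcal{N}_v(R_n)| = \Theta((k-1)^{R_n})$ gives $\log |\mathcal{N}_v(R_n)| = R_n \log(k-1) + O(1)$, so $R_n \ll \log n$ supplies the precondition $\log|\mathcal{N}_v(R_n)| \ll \log n$ of Theorem \ref{thm:lower_bound}. Because $V_n$ is a neighborhood of $v_0$ of size $n$ and balls grow like $(k-1)^r$, $V_n$ contains pairs $u,v$ with $d(u,v) = \Theta(\log_{k-1} n) \gg 2R_n$, so the maximum in Theorem \ref{thm:lower_bound} is over a non-empty set. Applying Theorem \ref{thm:lower_bound} and then Theorem \ref{thm:upper_bound} to the MSPRT $(D_n, T_n)$ produces
$$
T^*(V_n, R_n, \alpha) \gtrsim_{n,\alpha} \frac{\log\log(n/\alpha)}{\log(k-1)} \quad \text{and} \quad \max_{v \in V_n}\E_v[T_n] \lesssim_{n,\alpha} \frac{\log\log n}{\log(k-1)},
$$
both of which are asymptotic to $\log\log n / \log(k-1)$ in the iterated limit, since $\log\log(n/\alpha) = \log\log n + o(1)$ as $n \to \infty$ with $\alpha$ fixed, and additive constants are absorbed by the outer $\log\log$.

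The main technical obstacle is the first step: producing an asymptotic for $|\mathcal{N}_v(s) \setminus \mathcal{N}_u(s)|$ that is uniform in $d = d(u,v)$. The dominant contribution comes from the subtree hanging off $v$ (away from $u$) at depth $\Theta(s)$, which already gives $\Theta((k-1)^s)$, but one must also check that the side-contributions from $i \in (d/2, d)$ only perturb the implicit constant by a $k$-dependent factor, and that the boundary case $s < d$ (in which the two balls barely overlap) does not spoil the inversion. Everything after that is bookkeeping with the definitions of $\gtrsim_{n,\alpha}$ and $\lesssim_{n,\alpha}$.
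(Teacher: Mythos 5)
Your proposal is correct and follows essentially the same route as the paper: establish $f_{vu}(t)\asymp (k-1)^t$ uniformly in $d(u,v)$, invert to get $F_{vu}(z)\sim \log z/\log(k-1)$, and feed this into Theorems \ref{thm:lower_bound} and \ref{thm:upper_bound}, with the gap between $\widetilde{D}(Q_0,Q_1)$ and $C(Q_0,Q_1)$ disappearing inside the outer $\log\log$. The only difference is that the paper simply asserts the counting estimate (``simple counting arguments''), whereas you carry it out explicitly via the geodesic decomposition; your verification that the constants depend only on $k$ (for $k\ge 3$) and that the conclusion is uniform over $d(u,v)$ is exactly the detail the paper omits.
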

\cut{
\begin{proof}
For distinct $u,v \in V$ we can write  
\begin{align}
\label{eq:neighborhood_diff_decomp}
f_{vu}(t) & = \sum\limits_{s = 0}^t \left ( | \mathcal{N}_v(s) | - | \mathcal{N}_v(s) \cap \mathcal{N}_u(s) | \right).
\end{align}
It suffices to compute the asymptotic behavior of $f_{vu}$ and $F_{vu}$ to apply Theorems \ref{thm:lower_bound} and \ref{thm:upper_bound}. From straightforward computations, $\sum_{s = 0}^t | \mathcal{N}_v(s) | \sim \frac{k^2}{(k-1)^2} \cdot k^{t}$. Next, we compute $\sum_{s = 0}^t | \mathcal{N}_v(s) \cap \mathcal{N}_u(s) |$. Without loss of generality, suppose that $d(u,v) = r$ is even. Then there is a unique vertex $w$ such that $d(w,v) = d(w,u) = r/2$, so $| \mathcal{N}_v(s) \cap \mathcal{N}_u(s) | = | \mathcal{N}_w(s - r/2) |$. It follows that 
$$
f_{vu}(t) \sim \frac{k^2}{(k-1)^2} \left( k^t - k^{t - r/2} \right) = \frac{k^2}{(k-1)^2} (1 - k^{-r/2} ) k^t.
$$
Up to first-order terms, $F_{vu}(z) \sim \frac{\log z}{\log k}$. The desired result follows from Theorems \ref{thm:lower_bound} and \ref{thm:upper_bound}.
\end{proof}
}

The idea behind the proof is that for $k \ge 3$, $f_{vu}(t) \asymp (k-1)^t$ and $F_{vu}(z) \sim \frac{\log z}{\log (k-1)}$. This follows from simple counting arguments so we do not include it here. In particular, the first-order asymptotics of $F_{vu}$ do not depend on $d(u,v)$; hence the first-order behavior of $T^*$ does not depend on $R$. 

To generate the numerical results in Figure \ref{fig:trees}, we let $G$ be a balanced, $k$-regular tree with height $h_k$ and root vertex $v_0$, where we set $h_3 = 15$ (32,767 vertices), $h_4 = 11$ (29,524 vertices) and $h_5 = 9$ (87,381 vertices). In each case we enumerated the vertices by positive integers so that if vertex $u$ is assigned a larger number than vertex $v$, then $d(u,v_0) \ge d(v,v_0)$. We set $V_n$ to be the set of vertices with label at most $n$, and allowed $n$ to range between 1,000 and 16,000. We set the root vertex $v_0$ to be the cascade source and also set $\alpha = 0.1$, $R = 0$, $Q_0 \equiv N(0,1)$ and $Q_1 \equiv N(2,1)$. An estimate of $\E_{v_0}[T_n]$ was obtained by averaging over 50 simulations per data point. 

\begin{figure}[t]
\centerline{\includegraphics[scale=0.4]{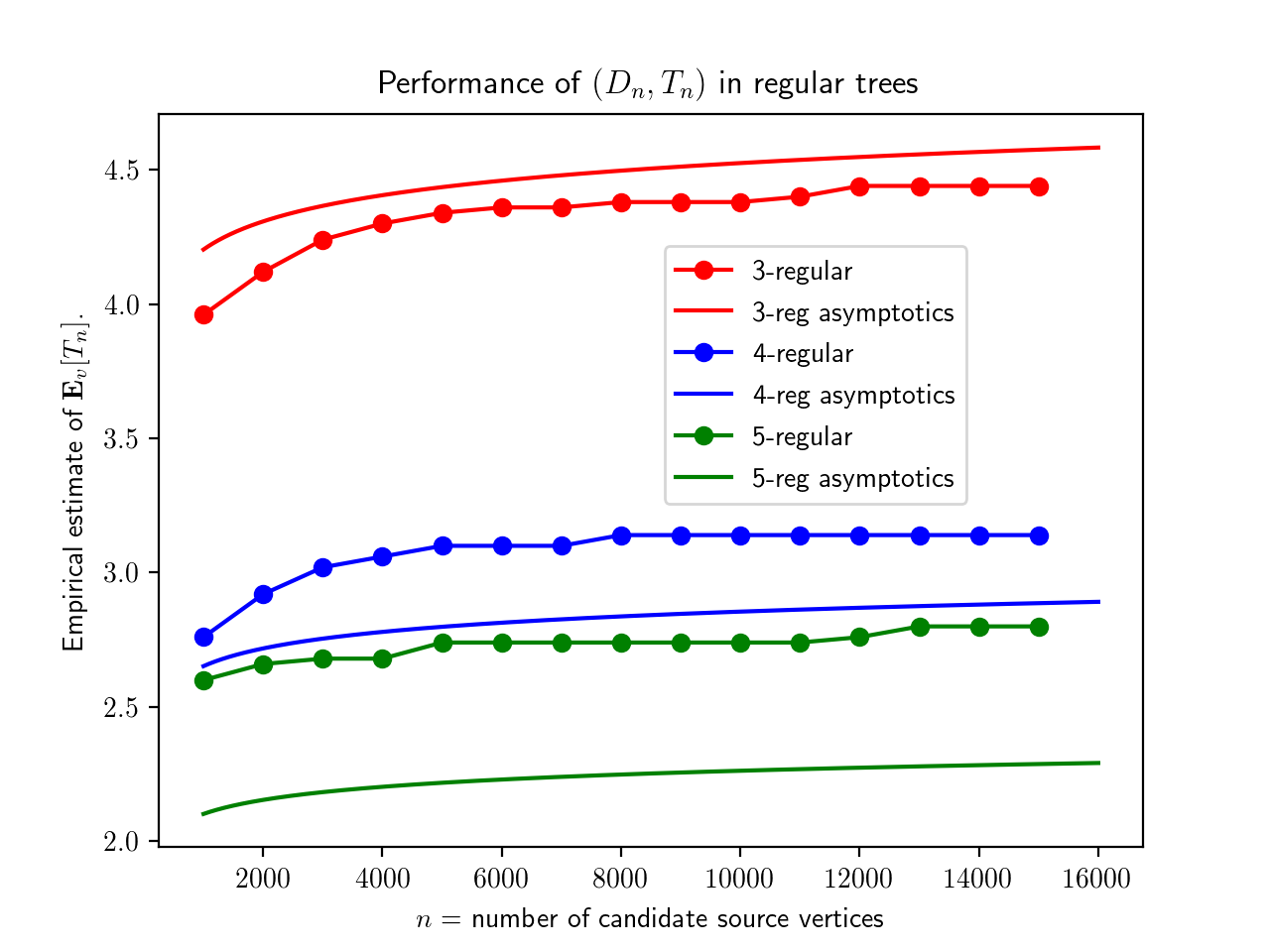}}
\caption{Numerical results for the MSPRT in regular trees for $R = 0$. }
\label{fig:trees}
\end{figure}

Next we consider optimal source estimation in lattices. We establish rigorous results in the one-dimensional case and discuss how things change in higher dimensions. Interestingly, the performance of the optimal estimator depends strongly on the confidence radius. 

\begin{corollary}
\label{cor:lattices}
Let $G$ be the infinite line graph. If $R_n \ll n$, then $(D_n,T_n)$ is asymptotically optimal up to a constant factor depending on $Q_0$ and $Q_1$. Let $C(Q_0,Q_1)$ be the constant from Theorem \ref{thm:upper_bound}. If $R_n \ll \sqrt{\log n}$, 
$$
\frac{\log n}{ 2R_n \widetilde{D}(Q_0,Q_1) } \lesssim_{n,\alpha} T^*(V_n,R_n,\alpha) \lesssim_{n,\alpha} \frac{\log n}{ R_n C(Q_0,Q_1) }.
$$
If $R_n \gg \sqrt{\log n}$ and $\log R_n  \ll \log n$, 
$$
\sqrt{ \frac{ \log n}{ \widetilde{D}(Q_0,Q_1) } } \lesssim_{n,\alpha} T^*(V_n,R_n,\alpha) \lesssim_{n,\alpha} \sqrt{ \frac{ \log n}{ C(Q_0,Q_1) }}.
$$
\end{corollary}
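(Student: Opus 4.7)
The plan is to compute $f_{vu}$ and $F_{vu}$ explicitly for the line graph and then invoke Theorems~\ref{thm:lower_bound} and~\ref{thm:upper_bound}. On $\mathbb{Z}$, the ball $\mathcal{N}_v(s)$ is an interval of $2s+1$ consecutive integers, so writing $r := d(u,v)$ a direct count gives $|\mathcal{N}_v(s) \setminus \mathcal{N}_u(s)| = \min(2s+1, r)$. Summing over $s$ produces a two-phase expression: $f_{vu}(t) = (t+1)^2$ while $2t+1 \le r$, and $f_{vu}(t) = rt - r^2/4 + O(r)$ once $2t \ge r$. Inverting, to first order,
$$
F_{vu}(z) \sim \begin{cases} \sqrt{z}, & z \ll r^2, \\ z/r, & z \gg r^2, \end{cases}
$$
equivalently $F_{vu}(z) \sim \max(\sqrt{z}, z/r)$.

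Next I identify the maximum of $F_{vu}(z)$ under the constraints from the two theorems. Since $|\mathcal{N}_v(s) \setminus \mathcal{N}_u(s)|$ is nondecreasing in $r$, so is $f_{vu}(t)$, hence $F_{vu}(z)$ is nonincreasing in $r$, and the maximum is attained at the smallest admissible distance $r_{\min}$. Theorem~\ref{thm:lower_bound} takes $r_{\min} \sim 2R_n$ with $z = \log(n/\alpha)/\widetilde{D}(Q_0,Q_1)$, while Theorem~\ref{thm:upper_bound} takes $r_{\min} \sim R_n$ with $z = \log n / C(Q_0,Q_1)$. Comparing $r_{\min}$ with $\sqrt{z}$ selects the regime: if $R_n \ll \sqrt{\log n}$ then $r_{\min} \ll \sqrt{z}$ in both cases and the $z/r$ branch dominates, yielding $\log n/(2R_n \widetilde{D})$ from below and $\log n/(R_n C)$ from above; if $R_n \gg \sqrt{\log n}$ then $r_{\min} \gg \sqrt{z}$, the $\sqrt{z}$ branch dominates, and both bounds collapse to $\sqrt{\log n}$ with the appropriate constant.

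It remains to check the prerequisite $\log|\mathcal{N}_v(R_n)| \ll \log n$ of Theorem~\ref{thm:lower_bound}, which on the line reduces to $\log R_n \ll \log n$: this is automatic when $R_n \ll \sqrt{\log n}$ and is assumed outright in the second regime. The $(1+o_n(1))$ factor from Theorem~\ref{thm:upper_bound} is absorbed into the stated asymptotics since the leading terms $\log n / R_n$ and $\sqrt{\log n}$ both diverge.

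The main obstacle I foresee is not any single computation but the interplay of two thresholds: the internal crossover of $F_{vu}$ at $z \asymp r^2$ must be compared against the externally imposed $r_{\min} \asymp R_n$, and one has to track the constants carefully (the factor $2$ versus $1$ arising from $d(u,v) > 2R_n$ versus $d(u,v) > R_n$) so that the leading-order coefficients in the final bound come out correctly. Once this bookkeeping is clean, the corollary follows by direct substitution into the two theorems. Extending to higher-dimensional lattices would amount to repeating the same two-phase analysis with $|\mathcal{N}_v(s)| \sim s^d$, which explains the informal remark about dimension-dependent behavior preceding the statement.
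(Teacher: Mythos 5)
Your proposal is correct and follows essentially the same route as the paper: compute $|\mathcal{N}_v(s)\setminus\mathcal{N}_u(s)|=\min(2s+1,r)$, sum to get the two-phase formula for $f_{vu}$, invert to $F_{vu}(z)\sim\max(\sqrt{z},z/r)$, and substitute into Theorems~\ref{thm:lower_bound} and~\ref{thm:upper_bound}. Your explicit monotonicity-in-$r$ argument locating the maximizer at $r_{\min}\asymp 2R_n$ (resp.\ $R_n$) and the check of the hypothesis $\log|\mathcal{N}_v(R_n)|\ll\log n$ are details the paper's sketch leaves implicit, but the substance is identical.
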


We sketch the derivation of $F_{vu}$. If $r = d(u,v)$ is even, then simple counting arguments show that 
\begin{equation}
\label{eq:f_vu_1d}
f_{vu}(t) = \begin{cases}
(t + 1)^2 & t < \frac{r}{2}; \\
\frac{r}{2} \left( 2t + 2 - \frac{r}{2} \right) & t \ge \frac{r}{2} .
\end{cases}
\end{equation}
Let $\lambda > 0$ be any constant. As $n \to \infty$, \eqref{eq:f_vu_1d} implies
\begin{equation}
F_{vu}(\lambda \log n) \sim \begin{cases}
\frac{\lambda \log n}{R_n} & R_n \ll \sqrt{\log n} \\
\sqrt{ \lambda \log n} & R_n \gg \sqrt{\log n}.
\end{cases}
\end{equation}
Applying Theorems \ref{thm:lower_bound} and \ref{thm:upper_bound} proves the corollary.

\cut{
\begin{proof}
It is easy to see that for any $v \in V$ and $s \ge 0$, $| \mathcal{N}_v(s) | = 2s + 1$. Next, fix $u,v \in V$ and assume without loss of generality that $d(u,v) = r$ is even. Thus $| \mathcal{N}_v(r/2) \cap \mathcal{N}_u(r/2) | = 1$, so for $s \ge r/2$, $| \mathcal{N}_v(s) \cap \mathcal{N}_u(s) | = | \mathcal{N}_w(s - r/2) | = 2s - r + 1$, where $w$ is the unique vertex in $\mathcal{N}_v(r/2) \cap \mathcal{N}_u(r/2)$. We can then compute
\begin{equation}
\label{eq:f_vu_1d}
f_{vu}(t) = \begin{cases}
(t + 1)^2 & t < \frac{r}{2}; \\
\frac{r}{2} \left( 2t + 2 - \frac{r}{2} \right) & t \ge \frac{r}{2} .
\end{cases}
\end{equation}
First suppose $R_n \ll (\log |V_n|)^{1/2}$. Then by \eqref{eq:f_vu_1d}, it holds for any constant $\lambda > 0$ that 
$$
\max\limits_{u,v \in V_n: d(u,v) > R_n } F_{vu}\left( \lambda \log |V_n|  \right) \sim \frac{\lambda \log |V_n|}{R_n }.
$$
Else if $R_n \gg (\log |V_n|)^{1/2}$, 
$$
\max\limits_{u,v \in V_n : d(u,v) > R_n} F_{vu} \left( \lambda \log |V_n| \right) \sim (\lambda \log |V_n|)^{1/2}.
$$
The desired result then follows from Theorems \ref{thm:lower_bound} and \ref{thm:upper_bound}.
\end{proof}
}

To generate the numerical results in Figure \ref{fig:radii} we let $G$ be a line graph with 1,000 vertices where we enumerate vertices from left to right from 1 to 1,000. We set $V_n$ to be the set of $n$ vertices closest to the vertex labelled 500 (where we chose odd values of $n$) and we let $n$ vary between 25 and 500. We studied the performance of the MSPRT when the cascade source is the vertex labelled 500 and set $\alpha = 0.2, Q_0 \equiv N(0,1)$ and $Q_1 \equiv N(0.5,1)$. To estimate $\E_{500}[T_n]$ we average each data point over 50 simulations. We plot our results for $R = 0, 5 \log n, \sqrt{n}$.  

Computing $f_{vu}$ and $F_{vu}$ in higher-dimensional lattices requires more involved combinatorial arguments, but Corollary \ref{cor:lattices} gives us a strong intuition of what to expect. The size of $|\mathcal{N}_v(s)|$ in the $k$-dimensional lattice is of order $s^k$, so for $t < \frac{d(u,v)}{2}$, $f_{vu}(t)$ should be on the order of $t^{k+1}$. Inverting $f_{vu}$, we expect that $T^*(n,R_n,\alpha)$ will increase as $(\log n)^{\frac{1}{k+1}}$ when $R_n \gg (\log n)^{\frac{1}{k+1}}$. 

\begin{figure}[t]
\centerline{\includegraphics[scale=0.4]{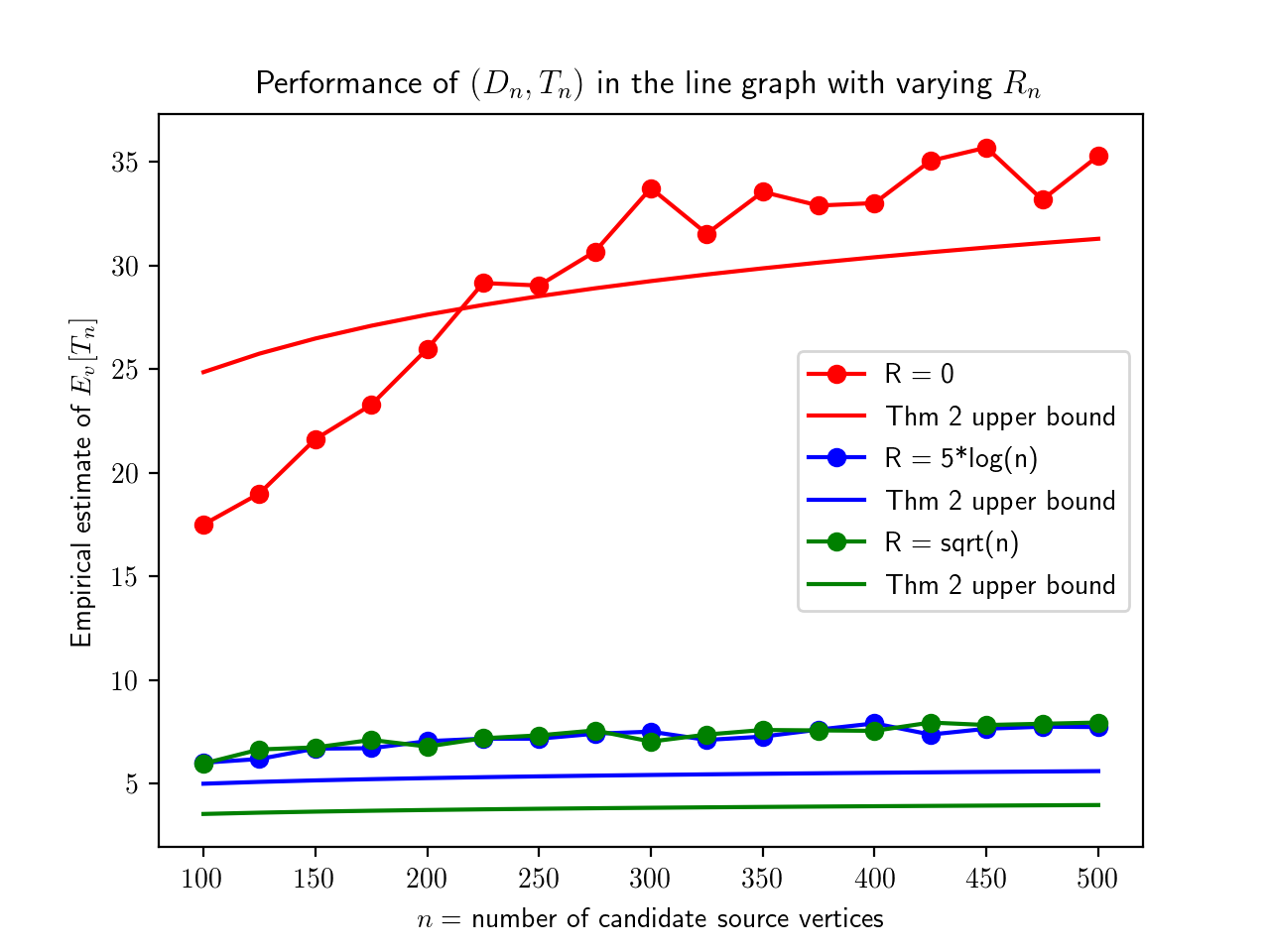}}
\caption{Numerical results for the MSPRT in the infinite line graph for $R_n = 0, 5 \log n, \sqrt{n}$. }
\label{fig:radii}
\end{figure}

\section{Conclusion}
\label{sec:conclusion}

In this paper, we studied the problem of estimating the source of a network cascade given noisy time-series data. We found that if $\min_{v \in V_n} |\mathcal{N}_v(R_n) | \ll n$, the MSPRT is asymptotically optimal as $\alpha \to 0$ in the case of regular trees, and is asymptotically optimal up to a constant factor in general. We have discussed many avenues for future work, including a study of {\it non-asymptotic} optimality, closing the gap between the upper and lower bounds of Theorems \ref{thm:lower_bound} and \ref{thm:upper_bound}, and investigating the relationship between the formulations \eqref{eq:formulation_1} and \eqref{eq:formulation_2} for $\Delta_G$.

\bibliographystyle{IEEEtran}
\bibliography{ciss_2020_final}

\end{document}